\documentclass{amsart}
\usepackage{amsfonts}
\usepackage{latexsym}
\usepackage{hyperref}
\usepackage{amssymb}
\usepackage{amsmath}
\usepackage{enumerate}
\usepackage{latexsym,color,amsmath,amsthm,amssymb,amscd,amsfonts}
\usepackage{graphicx}
\usepackage[normalem]{ulem}

\usepackage[utf8]{inputenc}
\usepackage[T1]{fontenc}

\newcommand{\R}{\mathbb R}

\newtheorem{thm}{Theorem}[section]

\newtheorem{lemma}[thm]{Lemma}

\newtheorem{propo}[thm]{Proposition}
\theoremstyle{remark}
\newtheorem*{rmk}{Remark}

\begin{document}


\title[Perelman-Pukhov quotient: better and optimal bounds]{On the Perelman-Pukhov quotient of successive radii: better and asymptotically optimal bounds}

\author[B. Gonz\'alez Merino]{Bernardo Gonz\'alez Merino}
\email{bgmerino@um.es}
\address{Department of Engineering and Technology of Computers, area of Applied Mathematics.
Universidad de Murcia, 
Spain}

\author[B. Mar\'in Gimeno]{Beatriz Mar\'in Gimeno}
\email{b.maringimeno@um.es}
\address{Department of Mathematics. University of Murcia, Spain}

\author[M. Runge]{Mia Runge}
\email{mia.runge@tum.de}
\address{Technical University of Munich, School of Computation, Information, and Technology, Department of Mathematics}

\subjclass[2020]{Primary 52A20, Secondary 52A40}
\keywords{convex body, sections and projections, radii}

\thanks{The first author is partially supported by Ministerio de Ciencia, Innovación y Universidades project PID2022-136320NB-I00/AEI/10.13039/501100011033/FEDER, UE. The second author is partially supported by PID2021-124157NB-I00 funded by MCIN/AEI
/10.13039/501100011033/ ‘ERDF A way of making Europe’, Spain, and by Contratos Predoctorales FPU-Universidad de Murcia 2024, Spain.}


\begin{abstract}
Perel'man in $1987$ and independently Pukhov in $1979$ proved that the quotient between the $(n-i+1)$-th successive outer radius and the $i$-th successive inner radius of a convex body in $n$-dimensions is not larger than $i+1$. Apart from the solved cases by
Jung $1901$ ($i=1$) and Steinhagen $1921$ ($i=n$),
only Perel'man ($1987$, $n=3$, $i=2$) and Gonz\'alez Merino ($2017$, $n\geq 4$, $i=2$ and $i=n-1$) provided small improvements that beat this bound. 


In this paper, we obtain sharper inequalities using relations between these inner and outer measures with the diameter and minimal width. 
We improve the current bounds in the following cases: $i=3$ when $4\leq n \leq 8$, $i=4$ when $n=5$, $6$, $i=5$ when $n=6$, $i=6$ when $n=7$, and for every $i\geq n-\Theta(\log n)$. 
Notably, our bounds 
provide the right order in $n$ when $i=n-m$, with $m$ constant and $n$ arbitrarily large. 
Additionally, we improve the case $n=5$, $i=3$ even further by refining an idea of Perel'man and using the optimal lower bound of the inradius in terms of the circumradius and the diameter in $3$-space (see \cite{BGR25}).
\end{abstract}

\maketitle

\section{Introduction}


Let $K\subset\mathbb R^n$ be a convex body, i.e.~a convex and compact set in $\mathbb R^n$ with non-empty interior. For every $1\leq i\leq n$, let $R_i(K)$ be the smallest radius of a solid cylinder of $i$-dimensional spherical cross-section containing $K$, and let $r_i(K)$ be the largest radius of an $i$-dimensional Euclidean disc contained in $K$. Perel'man \cite{Pe87} (see also Pukhov \cite{Pu79}) studied the relation between those quantities and proved that
\begin{equation}\label{eq:PerelPuk}
    \frac{R_{n-i+1}(K)}{r_i(K)} \leq i+1
\end{equation}
for every $i\in[n]:=\{1,\dots,n\}$. However, the above estimate is far from being sharp. Indeed, the cases $i=1$ and $i=n$ are classic results from convex geometry. The former was studied by Jung \cite{J01}, who showed that
\begin{equation}\label{eq:Jung}
    \frac{R_n(K)}{r_1(K)} \leq \sqrt{\frac{2n}{n+1}}.
\end{equation}
The latter was formalized by Steinhagen \cite{St21}, proving that
\begin{equation}\label{eq:Steinhagen}
    \frac{R_1(K)}{r_n(K)} \leq \left\{
    \begin{array}{cc}
        \sqrt{n} &  \text{if }n\text{ is odd,} \\
        \frac{n+1}{\sqrt{n+2}} & \text{ otherwise.} 
    \end{array}\right.
\end{equation}
Let us also remember that equality holds both in \eqref{eq:Jung} and \eqref{eq:Steinhagen} when $K$ is an $n$-dimensional regular simplex $S_n$. This is partly the reason why it is conjectured that the simplex is an extreme body for the optimal upper bound of the ratio within \eqref{eq:PerelPuk}. If $i=1$ or $i=n$, then $S_n$ attains equality in \eqref{eq:Jung} and \eqref{eq:Steinhagen}, respectively.  If $i=2$ and $n$ is even, then
\[
\frac{R_{n-1}(S_n)}{r_2(S_n)} = \frac{(2n-1)\sqrt{3}}{\sqrt{2n(n+1)}},
\]
and in the remaining cases
\begin{equation}\label{eq:simplex}
\frac{R_{n-i+1}(S_n)}{r_i(S_n)} = \sqrt{1-\frac{i}{n+1}}\sqrt{i(i+1)}
\end{equation}
(see \cite{Br05}). There is a limited amount of results that have improved the estimates in \eqref{eq:PerelPuk}. On the one hand, in \cite{GM17} it was shown (see Theorems 1.2 and 1.3) that
\[
\frac{R_{n-1}(K)}{r_{2}(K)} \leq 2\sqrt{2}\sqrt{\frac{n-1}{n}} \quad\text{and}\quad 
\frac{R_{2}(K)}{r_{n-1}(K)} \leq 2\sqrt{2}\sqrt{n}
\]
for every $n\geq 3$. 

In this paper, we generalize these ideas to improve \eqref{eq:PerelPuk} in two ways.
\begin{thm}\label{thm:Main}
    Let $K\subset\mathbb R^n$ be a convex body. For every $i\in\{2,\dots,n-1\}$ we have that
    \[
    \frac{R_{n-i+1}(K)}{r_i(K)} \leq \min\left\{ i \sqrt{2}\sqrt{1-\frac{1}{n-i+2}}\,, 2^{n-i}\sqrt{(n-i+1)(i+1)} \right\}.
    \]
\end{thm}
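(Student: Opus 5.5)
Both bounds come from comparing $R_{n-i+1}$ and $r_i$ with the diameter $D(K)$ and the minimal width $w(K)$, and then taking the minimum. We use the standard chains $r_1(K)=D(K)/2$, $R_1(K)=w(K)/2$, and the monotonicity $r_n\le\cdots\le r_1$, $R_1\le\cdots\le R_n$.

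\emph{First bound.} We bound $R_{n-i+1}(K)$ from above by the diameter and $r_i(K)$ from below by it.
\begin{itemize}
\item[(a)] Project $K$ orthogonally onto a suitable $(n-i+1)$-dimensional subspace $L$. Then $R_{n-i+1}(K)=\min_L R(K|L)$, and applying Jung's inequality \eqref{eq:Jung} in dimension $n-i+1$ gives
\[
R_{n-i+1}(K)\le \sqrt{\tfrac{2(n-i+1)}{n-i+2}}\,\frac{D(K|L)}{2}\le \sqrt{2}\sqrt{1-\tfrac{1}{n-i+2}}\,\frac{D(K)}{2}.
\]
\item[(b)] Prove a lower bound $r_i(K)\ge D(K)/(2i)$. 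Take a diameter segment $[x,y]$ and build inductively an $i$-dimensional simplex inside $K$ with one long edge: at each step choose a point of $K$ farthest from the current affine hull in an orthogonal direction. Then use the Perel'man-type estimate that the $i$-dimensional disc radius of $K$ is at least $1/i$ times the corresponding width-type quantity.
\end{itemize}
Concretely, I would invoke Perel'man's argument (already behind \eqref{eq:PerelPuk}) in the form $r_i(K)\ge R_{n-i+1}'(K)/i$, where $R'$ is measured via the diameter. Combining (a) and (b) gives the factor $i\sqrt2\sqrt{1-1/(n-i+2)}$.

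\emph{Second bound.} Here we work with the minimal width. By Steinhagen's inequality \eqref{eq:Steinhagen} applied within $i$-dimensional sections, $r_i$ is at least of order $w/(2\sqrt{i+1})$. More precisely, I would show that $r_i(K)\ge w_i/(2\sqrt{i+1})$, where $w_i$ is the minimal width among the section directions. This uses a "nested" choice in the style of Pukhov: find an $i$-flat $E$ such that $K\cap E$ has minimal width $w(K\cap E)\ge 2^{-(n-i)}\cdot c\,w$. Each codimension reduction loses at most a factor $2$: slicing through a suitable point, a section through the midpoint of a width-achieving chord retains at least half the width. Iterating $n-i$ times gives the factor $2^{n-i}$.

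On the other side, $R_{n-i+1}(K)\le \sqrt{n-i+1}\,R_1$-type bounds follow by projecting onto an $(n-i+1)$-space and applying Steinhagen's inequality dually: $R(K|L)\le \sqrt{n-i+1}\,w(K|L)/2$, with $w(K|L)$ chosen comparable to $w(K)$. Multiplying the factors gives $2^{n-i}\sqrt{(n-i+1)(i+1)}$.

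The main obstacle is the section step. We must guarantee that an $i$-dimensional section retains a $2^{-(n-i)}$ fraction of the appropriate width, while the projection uses a complementary subspace so that the two estimates refer to the same scale. I would first try choosing the flags of subspaces simultaneously, via the directions realizing the minimal widths of successive projections, and align the section with the orthogonal complement of the projection.
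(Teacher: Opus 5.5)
Both halves of your plan have a genuine gap, with a common cause. You route each comparison through a global quantity of $K$: $D(K)$ in the first bound, $w(K)$ in the second. But $r_i(K)$ is not bounded below by a multiple of $D(K)$, and $R_{n-i+1}(K)$ is not bounded above by a multiple of $w(K)$. In both cases the projection and the section have to be coupled to a common, $K$-dependent scale.

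\textbf{First bound.} Step (b), $r_i(K)\ge D(K)/(2i)$, is false for every $i\ge 2$. Take $K=[0,\ell]\times\varepsilon B_{n-1}$. The direction space of any $i$-disc in $K$ contains a unit vector $v\perp e^1$, and $K$ has width $2\varepsilon$ in direction $v$, so $r_i(K)\le\varepsilon$, while $D(K)\ge\ell$.

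The true statement, which is the whole content of this bound (Lemma~\ref{lem:Diamr_i}), is coupled: there is an $(i-1)$-dimensional subspace $H$ with $D(P_{H^\perp}(K))\le 2i\,r_i(K)$. Jung must then be applied to $P_{H^\perp}(K)$ itself, without passing to $D(K)$ as you do in (a).

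The key idea is the choice of $H$. Pick $u^{j_1},u^{j_2}\in K$ for $j\le i-1$ so that the differences $\widetilde u^j=u^{j_1}-u^{j_2}$ span a parallelotope of maximal $(i-1)$-volume, and set $H=\langle\widetilde u^1,\dots,\widetilde u^{i-1}\rangle$. Let $q^1,q^2\in K$ project onto a diametral pair of $P_{H^\perp}(K)$. Then $P=\frac1i\mathrm{conv}\{q^\ell+\sum_j u^{j_{k_j}}\}$ lies in $K$ and is an $i$-parallelotope with edges $\frac1i\widetilde u^1,\dots,\frac1i\widetilde u^{i-1},\frac1i(q^1-q^2)$. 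By maximality of the volume, its smallest height is the one over the facet parallel to $H$, namely $\frac1i D(P_{H^\perp}(K))$. Central symmetry gives $r(P)=w(P)/2$.

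Your remark ``$r_i(K)\ge R'_{n-i+1}(K)/i$ with $R'$ measured via the diameter'' can be read as this statement, but it is only asserted. The greedy simplex built on a diameter segment of $K$ is aimed at the false inequality above, so it cannot supply it.

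\textbf{Second bound.} There is no ``dual Steinhagen'' inequality $R(P_L(K))\le\sqrt{m}\,w(P_L(K))/2$: a long thin rectangle violates it, and Steinhagen bounds width by inradius, not circumradius. Worse, $R_{n-i+1}(K)$ admits no bound by any multiple of $w(K)$. For $K=B_{n-1}\times[0,\varepsilon]$, every $(n-i+1)$-dimensional $L$ contains a unit vector $v\perp e^n$, and $\pm v\in K$. Hence $R_{n-i+1}(K)\ge 1$ while $w(K)=\varepsilon$, so the two sides cannot meet at the scale $w(K)$.

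In the paper the common scale is the width $w_{n-i}$ of the last of the iterated \emph{sections} (not projections) $K\cap H_k$. Each section is cut through the midpoint of a minimal-width chord of the previous one, and the width directions are $e^1,e^2,\dots$. The halving fact you mention is needed in the stronger directional form of Lemma~\ref{lem:factortwo}: $h_K(u)+h_K(-u)\le 2\bigl(h_{K\cap H}(u)+h_{K\cap H}(-u)\bigr)$ for \emph{every} $u\perp a$, not only for the section's minimal width.

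Iterated, this bounds the width of $K$ in direction $e^k$ by $2^{k-1}w_{k-1}$. Together with $w_{k-1}\le 2w_k$, it places $P_{\langle e^1,\dots,e^{n-i+1}\rangle}(K)$ in a box with all sides at most $2^{n-i}w_{n-i}$. This gives $R_{n-i+1}(K)\le 2^{n-i-1}\sqrt{n-i+1}\,w_{n-i}$. Steinhagen applied in the $i$-dimensional section $K\cap H_{n-i}$ then gives $w_{n-i}\le 2\sqrt{i+1}\,r_i(K)$.

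Note that the projection space and the final section share the direction $e^{n-i+1}$. It is this overlap, not complementarity, that aligns the two scales you were worried about in your last paragraph.
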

The behavior of the estimates in Theorem \ref{thm:Main} differ a bit. The first estimate within the minimum provides better estimates for a finite amount of cases $(n,i)$: when $i=3$ and $n=4,\dots,9$, when $i=4$ and $n=5,6$, when $i=5$ and $n=6$, and when $i=6$ and $n=7$. The second estimate provides a better estimate than \eqref{eq:PerelPuk} whenever $i \geq n-\Theta(\log n)$. Moreover, for every $i=n-m$ for a fixed $m\in\mathbb N$, Theorem \ref{thm:Main} provides the correct order $O(\sqrt{n})$ of the upper bound for the quotient of the radii, beating the linear order $O(n)$ provided by \eqref{eq:PerelPuk}.

In his former paper, Perel'man \cite{Pe87} developed an idea to further improve the estimate in the particular case of $n=3$ and $i=2$:
\begin{equation}\label{eq:Pereln3i2}
    \frac{R_2(K)}{r_2(K)} <2.151.
\end{equation}
The argument is built on a somewhat hidden and intriguing idea: to use a lower bound for the inradius of a planar convex body in terms of its diameter and circumradius, proven by Santal\'o \cite{Sa61} in 1961. 

Very recently, a sharp estimate in dimension $3$, analogous to that of Santal\'o, has been shown (see \cite[Thm.~1.1 (2)]{BGR25}). This enables us to further improve the case of $n=5$ and $i=3$.
\begin{thm}\label{thm:53}
    Let $K\subset\mathbb R^5$ be a convex body. Then
    \[
    \frac{R_3(K)}{r_3(K)} < 3.518.
    \]
\end{thm}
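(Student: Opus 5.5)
The plan is to adapt Perel'man's argument for $n=3$, $i=2$ to $n=5$, $i=3$. Santal\'o's planar inequality will be replaced by the sharp three-dimensional lower bound for the inradius in terms of circumradius and diameter from \cite[Thm.~1.1 (2)]{BGR25}.

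\textbf{Parameters.} Let $D$ be the diameter of $K$ and $w$ its minimal width, so $r_1(K)=D/2$ and $R_1(K)=w/2$. I will use the two competing estimates that underlie the first bound of Theorem \ref{thm:Main}, keeping them separate rather than merging them into a single ratio:
\begin{itemize}
\item an upper bound for $R_3(K)$ in terms of $D$ alone;
\item an upper bound for $R_3(K)$ in terms of $w$;
\item a lower bound for $r_3(K)$ in terms of $w$ and $D$.
\end{itemize}

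\textbf{Upper bounds for $R_3(K)$.} Every orthogonal projection of $K$ onto a $3$-dimensional subspace has diameter at most $D$. Hence Jung's inequality \eqref{eq:Jung} in $\mathbb R^3$ gives $R_3(K)\le \sqrt{3/8}\,D$. Combined with the bound in terms of $w$, this gives $R_3(K)\le \min\{\sqrt{3/8}\,D,\ c\,w\}$ for an explicit constant $c$.

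\textbf{Lower bound for $r_3(K)$.} Take a diametral segment $[x,y]$ of $K$ and build a suitable $3$-dimensional affine subspace $F\supset[x,y]$. In $F$, the section $L=K\cap F$ has diameter exactly $D$. I will choose $F$ spanned by $[x,y]$ together with the directions in which $K$ is "fat", as guaranteed by the width $w$. I will then bound the circumradius $R(L)$ from above in terms of $D$ and $w$, using that $L\subset K$ lies in the width slabs. Applying \cite[Thm.~1.1 (2)]{BGR25} to $L$ gives
\[
r_3(K)\ \ge\ r(L)\ \ge\ g\bigl(R(L),D\bigr),
\]
where $g$ is the explicit sharp function from \cite{BGR25}. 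Since $g$ is decreasing in $R(L)$ for fixed $D$, any upper bound on $R(L)$ can be inserted directly.

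\textbf{Optimisation.} Normalise $D=1$ and set $t=w\in(0,1]$. This yields
\[
\frac{R_3(K)}{r_3(K)}\ \le\ \frac{\min\{\sqrt{3/8},\,c\,t\}}{\max\{h_1(t),\,g(\rho(t),1)\}},
\]
where $h_1$ is the lower bound for $r_3$ coming from the proof of Theorem \ref{thm:Main} and $\rho(t)$ is the upper bound on $R(L)$. The bound from Theorem \ref{thm:Main} is worst in a narrow range of $t$. Perel'man's idea is that precisely in that range the section $L$ has diameter large compared with its circumradius, and there the three-dimensional estimate improves the denominator. I will then maximise the right-hand side over $t$, splitting into the intervals where each term of the min and max is active. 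On each interval I locate the worst $t$ (typically where two branches cross) and evaluate numerically to get a value below $3.518$.

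\textbf{Main obstacle.} The hardest step is constructing $F$ and controlling $R(L)$ sharply enough. A crude bound such as $R(L)\le \sqrt{3/8}\,D$ makes $g$ useless. The section must instead inherit the small width of $K$ in a controlled direction, so that $R(L)$ is bounded by roughly $\tfrac12\sqrt{D^2+w^2}$. I would first try choosing $F$ to contain $[x,y]$ and the direction of minimal width. I would then check numerically whether the resulting bound gives the constant $3.518$. If it does not, I would tune the choice of the remaining direction of $F$ to optimise the estimate.
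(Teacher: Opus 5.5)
There is a genuine gap, and it is at the step your whole argument relies on: you have the monotonicity of the inequality from \cite{BGR25} backwards. For fixed diameter $D$, the right-hand side of \eqref{eq:rDR_inequality} is \emph{increasing} in $R$. It vanishes at $R=D/\sqrt3$ and reaches its largest value $D/\sqrt{24}$ at the Jung value $R=\sqrt{3/8}\,D$ (the regular tetrahedron). So that value is where the bound is strongest, not where it is useless. For $R\le D/\sqrt3$, flat bodies show that no positive lower bound on $r$ exists at all. The inequality says that a small inradius forces the diameter to be \emph{large} relative to the circumradius. The regime ``diameter large compared with circumradius'' that you want to exploit is exactly where it says nothing. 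Hence an upper bound such as $R(L)\le\frac12\sqrt{D^2+w^2}$ gives no information on $r(L)$. A thin three-dimensional neighbourhood of a planar disc of diameter $D$ has $R\approx D/2$, the smallest possible value, and $r\approx 0$. To use the inequality on a section through a diametral segment you would need a \emph{lower} bound $R(L)>D/\sqrt3$, and your construction provides none.

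The reduction to the two parameters $D(K)$ and $w(K)$ also fails on its own. No bound $R_3(K)\le c\,w(K)$ can hold. Take $K=[0,1]^4\times[0,\varepsilon]$, so $w(K)=\varepsilon$. Every $3$-dimensional linear subspace meets $\R^4\times\{0\}$ in a subspace $M$ of dimension at least $2$. Hence every $3$-dimensional projection of $K$ has circumradius at least $R(P_M([0,1]^4\times\{0\}))\ge\frac12$.

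Moreover, the first bound of Theorem~\ref{thm:Main} is not built from $D(K)$ and $w(K)$. Lemma~\ref{lem:Diamr_i} controls the diameter of one specific projection: $D(P_{H^\bot}(K))\le 6r_3(K)$ for some $H\in\mathcal A^5_2$. So there is no function $h_1(w/D)$ to optimise. That projection is what the paper feeds into \eqref{eq:rDR_inequality}:
\begin{itemize}
\item Normalise $R(P_{H^\bot}(K))=1$, so $R_3(K)\le 1$, and assume $r_3(K)<1/3$ (otherwise the ratio is at most $3$).
\item By Proposition~\ref{prop:opt}, take a simplex $S$ spanned by at most four points of the projection with $R(S)=1$.
\item Lift its vertices to $K$ and use \eqref{eq:inradiusProyectionidim} to get $r(S)\le r_3(K)$.
\item Read the BGR bound at fixed circumradius as $D(S)\ge g(r(S))$, with $g$ decreasing in the inradius. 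This gives $6r_3(K)\ge D(S)\ge g(r_3(K))$, and hence $r_3(K)\ge 0.2842$.
\end{itemize}
The missing idea is to apply the three-dimensional inequality to a projection rather than a section. Its circumradius dominates $R_3(K)$, its diameter is small, and its inscribed simplex has inradius at most $r_3(K)$.
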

When $K$ is centrally symmetric, further estimates for the corresponding ratio are provided in \cite{GM17,Pu79}. 

For more information on successive radii, their size for particular bodies, as well as computational aspects of these radii, we refer to \cite{Ba92, BH92, BH93, Br05, BrK11,BrT06, GK92, GK93, H92, Vr24}. Their relation with other measures has been studied in \cite{BH92, HH08, HH09}, their behavior with respect to other binary operations in \cite{CYL16, GH14,GH12}, and their extensions to containers different from the Euclidean ball in \cite{GK92, Ja17}. Moreover, quotients of different radii have been studied in \cite{BH93, BrK11, GH13, GK92, H92}. We would like to point out that successive radii are particular cases of the so-called Gelfand and Kolmogorov numbers in Banach Space Theory (cf. \cite{CHP13, GHH15, Pi85}), and are widely used in Approximation Theory. Recently, analogous definitions for mean radii have been developed (cf.~\cite{AHS18,ADHP14,LS24}).

The paper is organized as follows. In Section \ref{sec:notations} we introduce further notations and other preliminaries required for the development of the results. In Section \ref{sec:smallValues} we make use of the diameter to derive the first estimate in Theorem \ref{thm:Main} as well as Theorem \ref{thm:53} by using the Santal\'o type inequality derived in \cite{BGR25}. Finally, in Section \ref{sec:largeValues}, we use the minimal width to derive the second estimate in Theorem \ref{thm:Main}.

\section{Further notation and preliminaries}\label{sec:notations}

Let $K\subset\mathbb R^n$ be a convex body, i.e.~a convex and compact set in $\mathbb R^n$ with non-empty interior and denote by $\mathcal K^n$ the set of all convex bodies in $\mathbb R^n$. Let $\|x\|_2:=\sqrt{x_1^2+\cdots+x_n^2}$ be the Euclidean norm of $x=(x_1,\dots,x_n)$ and $B_n=\{x\in\mathbb R^n:\|x\|_2\leq 1\}$ be the Euclidean unit ball. For any $X\subset\mathbb R^n$ let $\mathrm{int}(X)$ and $\mathrm{bd}(X)$ be the interior and the boundary of $X$. Let $e^1,\dots,e^n$ be the canonical basis of $\mathbb R^n$.

Let $\mathcal A^n_i$ be the set of $i$-dimensional affine subspaces in $\mathbb R^n$. For every $X\subset \mathbb R^n$, let $\langle X\rangle$, $\mathrm{aff}(X)$, and $\mathrm{conv}(X)$ be the linear, affine, and convex hull of $X$, respectively. If $X\subset\mathbb R^n$, we define $\mathrm{dim}(X):=\mathrm{dim}(\mathrm{aff}(X))$. Moreover, let $X^\bot=\{y\in\mathbb R^n:y^{\top}x=0,\,\forall\,x\in X\}$ be the orthogonal complement of $X$. For any $L\in\mathcal A^n_i$, the orthogonal projection of $K$ onto $L$ is denoted by $P_L(K)$. We say that $\mathrm{relbd}(K)$ is the relative boundary of $K$, i.e.~the boundary of $K$ measured within $\mathrm{aff}(K)$,

For every $X,Y\subset\mathbb R^n$, let $X+Y:=\{x+y:x\in X,y\in Y\}$ be the Minkowski sum of $X$ and $Y$. When $X=\{x\}$, for some $x\in\mathbb R^n$, we write $X+Y=x+Y$. For any $t\in\mathbb R^n$, let $tX:=\{tx:x\in X\}$ be a rescaling of $X$ by $t$. For any $x,y\in\mathbb R^n$, let $[x,y]:=\{(1-\lambda) x+\lambda y:\lambda\in[0,1]\}$ be the line segment of endpoints $x$ and $y$. 

We say that $Q$ is a parallelotope if there exist $u^1,\dots,u^m \in\mathbb R^n$ linearly independent vectors such that $Q=x+\sum_{i=1}^m[0,u^i]$, for some $x\in\mathbb R^n$.

The circumradius of $K$ is given by $R(K)=\min\{\rho>0:x+K\subseteq \rho B_n,\,x\in\mathbb R^n\}$, the inradius by $r(K)=\max\{\rho>0:\rho B_n\subseteq x+K,\,x\in\mathbb R^n\}$, and the diameter by $D(K)=\max\{\|x-y\|_2:x,y\in K\}$. 

The support function of $K$ is defined by $h_K(u):=\sup\{u^{\top}x:x\in K\}$, for every $u\in\mathbb R^n\setminus\{0\}$. If $H\in\mathcal A^n_{n-1}$, we say that $H$ supports $K$ if $K$ is contained in one of the halfspaces determined by $H$ and $K\cap H\neq \emptyset$. In particular, $\{x\in\mathbb R^n:x^{\top}y=h_K(y)\}$ is a supporting hyperplane of $K$ for every $y\in\mathbb R^n\setminus\{0\}$.

The width of $K$ in the direction $u\in\mathbb R^n$, $\|u\|_2=1$, is given by $h_K(u)+h_K(-u)$. The minimal width of $K$ is defined by
\[
w(K)=\min\{h_K(u)+h_K(-u):\|u\|_2=1,\,u\in\mathbb R^n\}.
\]
A standard property of the minimal width is that for every $K\in\mathcal K^n$ there exist $x,y\in\mathrm{bd}(K)$ and $a\in\mathbb R^n$ with $\Vert a\Vert_2=1$ such that
\begin{equation}\label{eq:widthParallel}
w(K)=h_K(a)+h_K(-a)=a^{\top}x-a^{\top}y
\end{equation}
with $x-y\in\langle a\rangle$ (see \cite{GK92}).

For any $L\in\mathcal A^n_i$ such that $K\subset L$, we denote by $r_L(K)$ to be the inradius of $K$ measured within the subspace $L$. The same applies to $w_L(K)$. Note that $R_L(K)=R(K)$ and $D_L(K)=D(K)$.

The circumradius can be characterized by a finite amount of touching points between the boundaries of a convex body and its circumball.
\begin{propo}
    \label{prop:opt}
    Let $K\in\mathcal K^n$ be such that $K\subset B_n$. Then, the following are equivalent.
    \begin{enumerate}[i)]
        \item $R(K)=1$.
        \item There exist $p^1,\hdots, p^k\in K\cap \mathrm{bd}(B_n)$, for some $k\in\{2,\dots, n+1\}$ such that $0\in\mathrm{conv}(\{p^1,\hdots,p^k\})$.
    \end{enumerate}
\end{propo}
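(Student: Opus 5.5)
We prove the two implications separately. Throughout, set $T:=K\cap\mathrm{bd}(B_n)$, which is compact.

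For ii) $\Rightarrow$ i): since $K\subset B_n$, we have $R(K)\le 1$, so it suffices to show that no ball of radius $\rho<1$ can contain a translate of $K$. Suppose $x+K\subseteq\rho B_n$, i.e.\ $K\subseteq -x+\rho B_n$. Write $0=\sum_j\lambda_j p^j$ with $\lambda_j\ge 0$ and $\sum_j\lambda_j=1$. Since each $p^j\in K$,
\[
\|p^j+x\|_2^2=1+2x^\top p^j+\|x\|_2^2\le\rho^2 .
\]
Multiplying by $\lambda_j$ and summing over $j$ kills the middle term, because $\sum_j\lambda_j p^j=0$. This leaves $1+\|x\|_2^2\le\rho^2$, hence $\rho\ge 1$.

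For i) $\Rightarrow$ ii): we argue by contraposition. Assume $0\notin\mathrm{conv}(T)$; this includes the case $T=\emptyset$, which is handled directly by the observation that then $K$ is contained in a strictly smaller ball. The set $\mathrm{conv}(T)$ is compact, so strict separation gives a unit vector $u$ and $\delta>0$ with $u^\top p\ge\delta$ for all $p\in T$.

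We now move the centre slightly in direction $u$ and show that $K\subset \varepsilon u+\rho B_n$ for some $\rho<1$ and small $\varepsilon>0$.

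\emph{Points near the sphere.} By compactness, there is a neighbourhood $U$ of $T$ on which $u^\top p\ge\delta/2$. For $p\in K\cap U$,
\[
\|p-\varepsilon u\|_2^2=\|p\|_2^2-2\varepsilon u^\top p+\varepsilon^2\le 1-\varepsilon\delta+\varepsilon^2 .
\]

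\emph{Points away from the sphere.} On the compact set $K\setminus U$, $\|p\|_2$ is bounded by some $1-\eta$ with $\eta>0$. Hence $\|p-\varepsilon u\|_2\le 1-\eta+\varepsilon$.

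Choosing $\varepsilon<\min\{\delta,\eta\}$ makes both bounds strictly less than $1$, so $R(K)<1$. This contradicts i), and therefore $0\in\mathrm{conv}(T)$.

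Finally, Carathéodory's theorem in $\mathbb R^n$ expresses $0$ as a convex combination of at most $n+1$ points of $T$. We get $k\ge 2$ because a single point $p\in T$ has $\|p\|_2=1$, so $p\neq 0$.

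The main obstacle is the compactness step in i) $\Rightarrow$ ii). It is what turns the pointwise strict separation $u^\top p>0$ on $T$ into a uniform shrinking margin for all of $K$, and so yields a strictly smaller radius.
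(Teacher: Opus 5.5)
The paper gives no proof of Proposition~\ref{prop:opt}. It quotes it as a standard preliminary (the classical characterization of the circumball), so there is no in-paper argument to compare with. Your proposal is a correct, self-contained proof.

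In ii)$\Rightarrow$i), averaging $\|p^j+x\|_2^2=1+2x^\top p^j+\|x\|_2^2\le\rho^2$ with the weights $\lambda_j$ is exactly right. It even gives more than needed: $\rho^2\ge 1+\|x\|_2^2$ shows that $0$ is the \emph{unique} circumcentre.

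In i)$\Rightarrow$ii), the separation, the two-region estimate and the choice $\varepsilon<\min\{\delta,\eta\}$ all check out. Carathéodory together with $0\notin T$ then gives $2\le k\le n+1$. The one detail to make explicit is that $U$ must be open, so that $K\setminus U$ is compact and $\eta>0$ exists. The half-space $U=\{p:u^\top p>\delta/2\}$ does this directly, with no compactness argument needed; if $K\setminus U=\emptyset$ there is nothing to check.

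A common alternative treats both directions at once by convex analysis. For $K\subset B_n$, $R(K)=1$ iff $0$ minimizes the convex function $f(c)=\max_{p\in K}\|p-c\|_2^2$ and $f(0)=1$. By Danskin's formula, $\partial f(0)=-2\,\mathrm{conv}(T)$ when $T\neq\emptyset$. So the optimality condition $0\in\partial f(0)$ is exactly $0\in\mathrm{conv}(T)$.

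Your argument is the elementary unpacking of this. The averaging step is the sufficiency of the first-order condition. Your vector $u$ is an explicit descent direction, and the compactness step stands in for the subdifferential formula. The subdifferential route is shorter once that machinery is available; yours needs only separation and Carathéodory.
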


The $i$-th successive outer radius of $K$ is given by $R_i(K)=\min\{R(P_L(K)):L\in\mathcal A^n_i\}$, and the $i$-th successive inner radius of $K$ is given by $r_i(K)=\max\{r_L(K\cap L):L\in\mathcal A^n_i\}$. Notice that $R_n(K)=R(K)$, $r_n(K)=r(K)$, $2r_1(K)=D(K)$ and $2R_1(K)=w(K)$. Moreover, $R_1(K)\leq\cdots\leq R_n(K)$ and $r_n(K)\leq\cdots\leq r_1(K)$, and each successive radius is a monotonically increasing function with respect to set inclusion that is $1$-homogeneous.

We say that $K\in\mathcal K^n$ is a centrally symmetric set if there exists $x\in K$ such that $-x+K=-(-x+K)$. It is well known that in such case $2r(K)=w(K)$ and $2R(K)=D(K)$. A set $K\in\mathcal K^n$ is said to be a polytope if there exist $x^1,\dots,x^m\in\mathbb R^n$ such that $K=\mathrm{conv}(\{x^i:i\in[m]\})$. A facet of a polytope $K\in\mathcal K^n$ is a maximal subset with respect to set inclusion of $\mathrm{bd}(K)$ of dimension $n-1$.

Let $\mathrm{vol}(K)$ be the volume or Lebesgue measure of $K$. More generally, if $K\subset L$ for some $L\in\mathcal A^n_i$, we say that $\mathrm{vol}_i(K)$ is the volume of $K$ measured within the subspace $L$. In this general setting, the $i$-th volume $\mathrm{vol}_i$ is monotonically increasing with respect to set inclusion and homogeneous of degree $i$, i.e.~$\mathrm{vol}_i(tK)=t^i\mathrm{vol}_i(K)$ for every $t\geq 0$, $K\in\mathcal K^n$, $L\in\mathcal A^n_i$ with $K\subset L$.

A 3-dimensional version of Santaló's inequality \cite{Sa61} 
was recently shown in \cite[Thm.~1.1]{BGR25}. For three dimensional convex bodies $K\in\mathcal K^3$ with $D(K)\leq \sqrt{3}R(K)$, we have
\begin{equation}
    \label{eq:rDR_inequality}
       r(K) \geq \frac{D(K)^2\sqrt{3R(K)^2-D(K)^2}}{4R(K)\sqrt{3R(K)^2-D(K)^2}+\sqrt{3}(4R(K)^2-D(K)^2)}. 
\end{equation}

A technical result we need for connecting the inradius of a lower-dimensional $K$ and some of its projections is the following: for every $K\in\mathcal K^n$ with $\mathrm{dim}(K)=i$ and $L\in\mathcal A^n_i$, we have that
\begin{equation}\label{eq:inradiusProyectionidim}
    r_L(P_L(K)) \leq r_{\mathrm{aff}(K)}(K)
\end{equation}
(see \cite[Lemma 4.1]{GM17}).

\section{Estimates for small values of $i$}\label{sec:smallValues}

We start the section by proving a fundamental lemma that connects the diameter of a suitable projection of a convex body $K$ with the $i$-th 
inner radius of $K$. 

\begin{lemma}\label{lem:Diamr_i}
    Let $K\in\mathcal K^n$ and $i\in\{2,\dots,n\}$. Then, there exists $H\in\mathcal A^n_{i-1}$ such that
    \[
    D(P_{H^\bot}(K)) \leq 2i r_i(K).
    \]
\end{lemma}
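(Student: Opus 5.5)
The plan is to build a maximal-volume $i$-simplex inside $K$ and to take $H$ to be the affine hull of one of its facets. Since $\dim H=i-1$, the projection $P_{H^\bot}$ collapses that facet to a single point, and maximality will force every point of $K$ to be close to $H$. The comparison with $r_i(K)$ will then come from the classical formula expressing the inradius of a simplex through its heights.

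\textbf{Step 1 (the simplex).} By compactness choose $v^0,\dots,v^i\in K$ such that $S=\mathrm{conv}\{v^0,\dots,v^i\}$ has maximal $i$-dimensional volume among all $i$-simplices contained in $K$. This volume is positive because $\mathrm{int}(K)\neq\emptyset$. For each $j$ let $F_j$ be the facet of $S$ opposite to $v^j$, and let $h_j$ be the distance from $v^j$ to $\mathrm{aff}(F_j)$.

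\textbf{Step 2 (maximality gives a tube).} For every $x\in K$, the simplex $\mathrm{conv}(F_j\cup\{x\})$ has volume $\mathrm{vol}_{i-1}(F_j)\,\mathrm{dist}(x,\mathrm{aff}(F_j))/i$. Maximality of $S$ therefore gives $\mathrm{dist}(x,\mathrm{aff}(F_j))\le h_j$. Take $H=\mathrm{aff}(F_j)\in\mathcal A^n_{i-1}$. Then $P_{H^\bot}(\mathrm{aff}(F_j))$ is a single point $p$, and the distance of $x$ to $H$ equals $\|P_{H^\bot}(x)-p\|_2$. Hence $P_{H^\bot}(K)\subset p+h_j(B_n\cap H^\bot)$, and so $D(P_{H^\bot}(K))\le 2h_j$.

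\textbf{Step 3 (heights versus $r_i$).} Since $S\subset K\cap\mathrm{aff}(S)$ and $\mathrm{aff}(S)\in\mathcal A^n_i$, we have $r_i(K)\ge r_{\mathrm{aff}(S)}(S)=\bigl(\sum_{k=0}^i 1/h_k\bigr)^{-1}$. Choosing $j$ with $h_j$ minimal gives $\sum_k 1/h_k\le (i+1)/h_j$, hence $h_j\le (i+1)r_i(K)$. This yields $D(P_{H^\bot}(K))\le 2(i+1)r_i(K)$.

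\textbf{Main obstacle: improving $i+1$ to $i$.} The crude argument above is off by exactly one unit in the constant, and I expect this to be the real difficulty. I would try the following two refinements, in this order.

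\begin{enumerate}[(a)]
\item Sharpen Step 2. The bound $\mathrm{dist}(x,H)\le h_j$ only says that $P_{H^\bot}(K)$ lies in a ball of radius $h_j$ centred at $p$, and a ball of that radius has diameter $2h_j$. However, $p$ is itself a point of $P_{H^\bot}(K)$. Also, maximality over all vertex replacements gives $\mathrm{dist}(x,\mathrm{aff}(F_k))\le h_k$ for every $k$. Combining these constraints, one can try to show that $P_{H^\bot}(K)$ lies in a ball of radius about $h_j\, i/(i+1)$ centred at a suitable point, or directly that its diameter is at most $2h_j\,i/(i+1)$.
\item Alternatively, sharpen Step 3. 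Choose the facet more cleverly, or replace the simplex with a larger $i$-dimensional subset of $K\cap\mathrm{aff}(S)$, namely $\mathrm{conv}\bigl(S\cup\{x\}\bigr)$ for a point $x$ realizing the diameter of $P_{H^\bot}(K)$. Then use \eqref{eq:inradiusProyectionidim} to pass to the projection onto $\mathrm{aff}(S)$. The goal is an $i$-dimensional body in $K$ whose inradius is at least $D(P_{H^\bot}(K))/(2i)$.
\end{enumerate}
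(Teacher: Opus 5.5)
\textbf{There is a genuine gap.} Your Steps 1--3 are correct, but they only give $D(P_{H^\perp}(K))\le 2(i+1)r_i(K)$. (Take $H$ to be the direction space of $\mathrm{aff}(F_j)$, so that $H^\perp$ has dimension $n-i+1$.) The passage from $i+1$ to $i$ is the whole content of the lemma, and you leave it as speculation. This is not cosmetic. With $2(i+1)$, the first bound of Theorem~\ref{thm:Main} would become $\sqrt2(i+1)\sqrt{1-1/(n-i+2)}$, which exceeds Perel'man's $i+1$ whenever $i<n$.

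Neither refinement works as proposed. For (a), take $K=[0,1]^2$, $i=n=2$, and the maximal triangle $S=\mathrm{conv}\{0,e^1,e^2\}$. The smallest height is $h_j=1/\sqrt2$, across the hypotenuse. Then $P_{H^\perp}(K)$ is a segment of length $\sqrt2=2h_j$, so no bound of the form $2h_j\,i/(i+1)$ can hold. In fact $2i\,r(S)=4-2\sqrt2<\sqrt2$, so with this $H$ no argument that uses $S$ as the witness for $r_i(K)$ can reach the constant $2i$. For (b), $\mathrm{conv}(S\cup\{x\})$ is in general $(i+1)$-dimensional and not contained in $\mathrm{aff}(S)$. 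The inequality \eqref{eq:inradiusProyectionidim} applies only to $i$-dimensional bodies. Projecting onto $\mathrm{aff}(S)$ produces a set no longer contained in $K$, which tells you nothing about $r_i(K)$.

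\textbf{The missing idea} is to replace the simplex by a centrally symmetric witness. A simplex loses the factor $i+1$ through $r=(\sum_k 1/h_k)^{-1}$, whereas a parallelotope has inradius equal to half its smallest height. The paper proceeds as follows:
\begin{enumerate}[(1)]
\item Maximize the $(i-1)$-volume of the parallelotope $Q$ spanned by difference vectors $\widetilde u^j=u^{j_1}-u^{j_2}$ of points of $K$, $j=1,\dots,i-1$.
\item Set $H=\langle \widetilde u^1,\dots,\widetilde u^{i-1}\rangle$, and take $q^1,q^2\in K$ whose projections realize $D(P_{H^\perp}(K))$.
\item Form the Minkowski average $P=\frac1i\bigl([q^2,q^1]+\sum_{j=1}^{i-1}[u^{j_2},u^{j_1}]\bigr)\subseteq K$.
\end{enumerate}
This $P$ is an $i$-dimensional parallelotope whose height across the facet parallel to $H$ is exactly $\frac1i D(P_{H^\perp}(K))$. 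Since $q^1-q^2$ is also a difference vector of $K$, maximality of $Q$ implies that every other facet of $P$ has $(i-1)$-volume at most that of this facet. Hence every other height is at least this one, and $r_i(K)\ge r_{\mathrm{aff}(P)}(P)=D(P_{H^\perp}(K))/(2i)$.

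Note how maximality is used differently. In your Step 2 it confines $K$ to a tube around $H$. In the paper it guarantees that the diameter direction is the thinnest direction of the witness body.
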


\begin{proof}

For any set of linearly independent vectors $u^1,\dots,u^m \in\mathbb R^n$, $m\leq n$, let $\bigwedge_{j=1}^{m} u^j := \sum_{j=1}^m[0,u^j]$ be the $m$-dimensional parallelotope spanned by $u^1,\dots,u^m$.

Let $\left\{u^{j_k}\in K:\,k=1,2,\,j=1,\ldots,i-1\right\}$ be such that the $(i-1)$-dimensional volume of $Q:=\bigwedge_{j=1}^{i-1}\widetilde{u}^j$ is maximal among all possible choices $u^{j_k}\in K$, $k=1,2,\,j=1,\ldots,i-1$, where $\widetilde{u}^j:=u^{j_1}-u^{j_2}$, for every $j=1,\dots,i-1$. Let us furthermore consider $H:=\left\langle\left\{\widetilde{u}^j:\,j=1,\ldots,i-1\right\}\right\rangle$.

Let $p^1,p^2\in H^\perp$ be such that $D(P_{H^{\perp}}(K))=\left\Vert p^1-p^2\right\Vert_2$. Moreover, let $q^1,q^2\in K$ be such that $P_{H^\perp}(q^1)=p^1$ and $P_{H^\perp}(q^2)=p^2$. We also denote $\widetilde{u}^i:=q^1-q^2$. Let us define
\[
P:=\frac1i \cdot \mathrm{conv}\left(\left\{q^\ell+\sum_{j=1}^{i-1}u^{j_{k_j}}: \,\ell,k_j\in\{1,2\}\right\}\right)\subseteq K.
\]
We now observe that $P$ is a $i$-dimensional parallelotope. On the one hand, 
\begin{eqnarray*}
        \frac{1}{i}\widetilde{u}^i=\frac{1}{i}\left(q^1+\sum_{j=1}^{i-1}u^{j_{k}}\right)-\frac{1}{i}\left(q^2+\sum_{j=1}^{i-1}u^{j_{k}}\right);
\end{eqnarray*}
on the other hand
\begin{eqnarray*}
        \frac{1}{i}\widetilde{u}^s=\frac{1}{i}\left(q^\ell+u^{s_1}+\sum_{j\in[i-1]\setminus s}u^{j_{k}}\right)-\frac{1}{i}\left(q^\ell+u^{s_2}+\sum_{j\in[i-1]\setminus s}u^{j_{k}}\right)
\end{eqnarray*}
for every $s=1,\ldots,i-1$. Thus, 
    \[
    P=\frac1i\left(q^2+\sum_{j=1}^{i-1}u^{j_2}\right)+\bigwedge_{j=1}^{i} \left(\frac{1}{i} \widetilde{u}^j\right).
    \]
    Since $P$ is a $i$-dimensional parallelotope, and thus centrally symmetric, it follows that 
    \begin{equation}\label{eq:centralSymm}
    r_{\mathrm{aff}(P)}(P)=\frac{w_{\mathrm{aff}(P)}(P)}{2}.
        \end{equation}
We now compute the minimal width of $P$. Let $\sigma\subseteq[i]$ with $|\sigma|=i-1$. Let $h_\sigma$ be the height of $P$ (within $\mathrm{aff}(P)$) orthogonal to the subspace $H_\sigma:=\langle \{\widetilde{u}^j:j\in\sigma\}\rangle$. Evidently, we have that $$w_{\mathrm{aff}(P)}(P)=\min\left\{h_{\sigma}:\,\sigma\subseteq \{1,\ldots,i\}\text{ with } |\sigma|=i-1\right\}.$$

Using the fact that $P$ is a parallelotope and since the facets of $P$ are generated by the vectors $\{\frac1i \widetilde{u}^j:j\in\sigma\}$, for every $\sigma\subseteq[i]$ with $|\sigma|=i-1$, we have that
    \begin{eqnarray*}
            \mathrm{vol}_{i}(P)=h_\sigma\mathrm{vol}_{i-1}\left(\bigwedge_{j\in \sigma}\left(\frac{1}{i}\widetilde{u}^j\right)\right).
    \end{eqnarray*}
    Note that by the homogeneity of the volume, we have that
    \[
    \mathrm{vol}_{i-1}\left(\bigwedge_{j\in \sigma}\left(\frac{1}{i}\widetilde{u}^j\right)\right) = \frac{1}{i^{i-1}}\mathrm{vol}_{i-1}\left(\bigwedge_{j\in \sigma}\widetilde{u}^j\right)
    \]
In particular, we would have that
\begin{eqnarray}\label{eq:maxofh}
        h_\sigma=\frac{\mathrm{vol}_{i-1}\left(\bigwedge_{j=1}^{i-1}\left(\frac{1}{i}\widetilde{u}^j\right)\right)}{\mathrm{vol}_{i-1}\left(\bigwedge_{j\in \sigma}\left(\frac{1}{i}\widetilde{u}^j\right)\right)} h_{[i-1]}
        =\frac{\mathrm{vol}_{i-1}\left(Q\right)}{\mathrm{vol}_{i-1}\left(\bigwedge_{j\in \sigma}\widetilde{u}^j\right)} h_{[i-1]}
        \geq h_{[i-1]},
    \end{eqnarray}
    where the last inequality follows from the maximality of $Q$.

    Note also that 
    \begin{eqnarray}\label{eq:hisdiam}
        h_{[i-1]}=\left\Vert P_{H^\perp}\left(\frac{1}{i}\left(q^1-q^2\right)\right)\right\Vert_2=\frac{1}{i}\left\Vert p^1-p^2\right\Vert_2
        =\frac{1}{i}D(P_{H^\perp}(K)).
    \end{eqnarray}
    We can now conclude that
    \begin{eqnarray*}
        r_i(K) \geq r_{\mathrm{aff}(P)}\left(K\cap \mathrm{aff}(P)\right)\geq r_{\mathrm{aff}(P)}(P)=\frac{w_{\mathrm{aff}(P)}(P)}{2} = \frac{h_{[i-1]}}{2}   
        =   \frac{D(P_{H^\perp}(K))}{2i},
    \end{eqnarray*}
    where above we used the fact that $P\subset K$, the monotonicity of the inradius, \eqref{eq:centralSymm}, \eqref{eq:maxofh}, and \eqref{eq:hisdiam}. This concludes the proof.
\end{proof}

As a corollary, we are now able to show the first inequality from Theorem \ref{thm:Main}.
\begin{proof}[Proof of left-hand-side of Theorem \ref{thm:Main}]
    Let $H\in\mathcal A^n_{i-1}$ be the subspace provided by Lemma \ref{lem:Diamr_i}. Thus
    \[
    R_{n-i+1}(K) \leq R(P_{H^\bot}(K)) \leq \sqrt{\frac{n-i+1}{2(n-i+2)}} D(P_{H^\bot}(K)) \leq 2i\sqrt{\frac{n-i+1}{2(n-i+2)}} r_i(K)
    \]
    where the second inequality is Jung's Theorem applied to the $(n-i+1)$-dimensional convex body $P_{H^\bot}(K)$.
\end{proof}

We now compute the solution to \eqref{eq:rDR_inequality} by means of the diameter.
\begin{lemma}\label{lem:DiamIneqBGR}
  Let $K\in\mathcal K^3$ with $R(K)=1$ and $r(K) \leq 1/3$. Then
    \[
\begin{split}
& D(K)\geq \\
& \sqrt{\frac{1}{2}}\sqrt{-2r(K)^2+8r(K)+3 +\sqrt{3}\sqrt{4r(K)^4-16r(K)^3+26r(K)^2-16r(K)+3}}.
\end{split}
\]
\end{lemma}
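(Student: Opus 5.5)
The plan is to invert inequality \eqref{eq:rDR_inequality} with $R(K)=1$. Write $D:=D(K)$, $r:=r(K)$ and
\[
f(D):=\frac{D^2\sqrt{3-D^2}}{4\sqrt{3-D^2}+\sqrt3\,(4-D^2)},\qquad D\in\bigl[\sqrt{8/3},\sqrt3\bigr].
\]
The lower end comes from Jung's inequality \eqref{eq:Jung} in $\mathbb R^3$, which gives $D\ge\sqrt{8/3}$ when $R=1$. Let $g(r)$ denote the right-hand side of the claimed inequality.

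\textbf{Step 1: reduce to $D\le\sqrt3$.} If $D>\sqrt3$ there is nothing to prove once we know that $g(r)\le\sqrt3$ on $[0,1/3]$. At $r=0$ the radicand is $\tfrac12(3+3)=3$, so $g(0)=\sqrt3$. At $r=1/3$ the inner quartic equals $1/81$, giving $g(1/3)=\sqrt{(49+\sqrt3)/18}\approx1.68$. I would check that $g$ is decreasing on $[0,1/3]$, either by differentiating or by the monotonicity argument of Step 2, since $g$ is the inverse of a decreasing function. In the remaining case $D\le\sqrt3$ we may apply \eqref{eq:rDR_inequality}, which gives $r\ge f(D)$.

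\textbf{Step 2: $f$ is strictly decreasing.} Substitute $s=\sqrt{3-D^2}\in[0,1/\sqrt3]$, so that $4-D^2=1+s^2$ and
\[
f=\frac{(3-s^2)s}{4s+\sqrt3(1+s^2)}.
\]
I would show this is increasing in $s$ by checking that the derivative's numerator is a polynomial that is positive on the interval. Since $s$ decreases in $D$, $f$ is decreasing in $D$. Moreover $f(\sqrt3)=0$ and $f(\sqrt{8/3})=1/3$, the latter recovering the regular tetrahedron with $r=1/3$; this is where the hypothesis $r\le1/3$ enters. Hence $f$ is a bijection onto $[0,1/3]$, and $r\ge f(D)$ is equivalent to $D\ge f^{-1}(r)$.

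\textbf{Step 3: compute $f^{-1}(r)$ explicitly.} Put $t=D^2$. The equation $f(D)=r$ reads $\sqrt{3-t}\,(t-4r)=\sqrt3\,r(4-t)$. Squaring gives
\[
(3-t)(t-4r)^2=3r^2(4-t)^2,
\]
a cubic in $t$. By my expansion it has the trivial factor $t$, leaving a quadratic in $t$ whose larger root should be the displayed expression $g(r)^2$. My own quick expansion gives a $t^2$-coefficient with $-3r^2$ rather than the $-2r^2$ in the statement, so this computation must be redone carefully, and if needed matched against the exact normalisation of \eqref{eq:rDR_inequality}.

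\textbf{Main obstacle.} The real difficulty is the root selection, not the algebra. Squaring introduces extraneous solutions, so I must verify that the chosen root satisfies $t\ge4r$ and $t\in[8/3,3]$, so that both sides of the unsquared equation are nonnegative. I must also verify that the other root lies outside $[8/3,3]$ for $r\in[0,1/3]$. I would do both by evaluating at the endpoints $r=0$ and $r=1/3$ and using the monotonicity from Step 2.
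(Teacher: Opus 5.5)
Your plan is the paper's proof. The paper handles $D(K)\ge\sqrt3$ by noting that the right-hand side equals $\sqrt3$ at $r=0$ and is decreasing. Otherwise it inserts $R(K)=1$ into \eqref{eq:rDR_inequality} and solves for $D(K)$.

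The discrepancy you flag in Step 3 is real, and it is the printed statement, not your expansion, that is wrong. Indeed
\[
(3-t)(t-4r)^2-3r^2(4-t)^2=-t\bigl(t^2-(3+8r-3r^2)\,t+8r(3-r)\bigr),
\]
and the quadratic factor has discriminant $3(3r^4-16r^3+26r^2-16r+3)=3(1-r)^2(1-3r)(3-r)$. So what \eqref{eq:rDR_inequality} actually yields is
\[
D(K)\ge\sqrt{\tfrac12}\sqrt{-3r^2+8r+3+\sqrt3\sqrt{3r^4-16r^3+26r^2-16r+3}},
\]
with $-3r^2$ and $3r^4$ where the statement has $-2r^2$ and $4r^4$.

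Your own numbers already refute the printed version. The regular tetrahedron with $R=1$ has $r=1/3$ and $D=\sqrt{8/3}\approx1.633$. The printed right-hand side at $r=1/3$ is $\sqrt{(49+\sqrt3)/18}\approx1.679$. The corrected right-hand side equals exactly $\sqrt{8/3}$ there, consistent with your $f(\sqrt{8/3})=1/3$. So do not try to reach $-2r^2$; prove the corrected bound. The constant $0.28421$ in the proof of Theorem~\ref{thm:53} is the root of $6r=g(r)$ for the corrected $g$, so the misprint appears to be confined to this display.

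The factorization also removes what you call the main obstacle. Let $q(t)=t^2-(3+8r-3r^2)t+8r(3-r)$. Then $q(8/3)=\tfrac89(3r-1)\le0$ and $q(3)=r^2\ge0$, hence $t_-\le 8/3\le t_+\le 3$, with $t_-<8/3$ when $r<1/3$. Consequently:
\begin{itemize}
\item $t_+$ is the unique root in $[8/3,3]$.
\item $t_+\ge 8/3>4r$, so the unsquared equation holds with both sides nonnegative.
\item $g(r)=\sqrt{t_+}\le\sqrt3$ follows immediately, which is all Step 1 needs; no separate monotonicity computation for $g$ is required.
\end{itemize}

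For Step 2, the numerator of $df/ds$ is $3\sqrt3-6\sqrt3 s^2-8s^3-\sqrt3 s^4$. It decreases on $s\ge0$ and vanishes exactly at $s=1/\sqrt3$. So $f$ is strictly monotone on your interval but turns around for $D<\sqrt{8/3}$; the smaller root $t_-$ lives on that other branch. This is why your appeal to Jung's inequality to get $D\ge\sqrt{8/3}$ is genuinely needed for the inversion, a point the paper leaves implicit.
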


\begin{proof}
    Doing some routine but tedious computations shows that the right-hand side of the inequality above is decreasing for $r(K)\in[0,1/3]$. Moreover, it is equal to $\sqrt{3}$ if $r(K)=0$. Hence, the inequality holds true when $D(K) \geq \sqrt{3}$. Now, if $D(K)<\sqrt{3}$ we may apply \eqref{eq:rDR_inequality} to $K$. Inserting $R(K)=1$ and solving for $D(K)$ yields the inequality.
\end{proof}

\begin{proof}[Proof of Theorem \ref{thm:53}]
    Let us apply Lemma \ref{lem:Diamr_i} to $K$ with $i=3$. Thus, there exists $H\in\mathcal A^5_2$ such that
    \[
    D(P_{H^\bot}(K)) \leq 6r_3(K).
    \]
    After a suitable rescaling and translation, we can assume that $R(P_{H^\bot}(K))=1$ with $P_{H^\bot}(K) \subseteq B_n\cap H^\bot$. Note that if $r_3(K) \geq 1/3$, we have that
    \[
    R_3(K) \leq R(P_{H^\bot}(K)) = 1 \leq 3 r_3(K)<3.5186 r_3(K).
    \]
    Hence, it remains to show the inequality in case of $r_3(K)<1/3$.

    By Proposition \ref{prop:opt}, let $p^1,\dots,p^k\in P_{H^\bot}(K)$, for some $k\in\{2,3,4\}$, and $S:=\mathrm{conv}(\{p^1,\dots,p^k\})$, be such that $R(S)=R(P_{H^\bot}(K))=1$. Since $S\subset P_{H^\bot}(K)$, by monotonicity we have that $r_{H^{\perp}}(S)\leq r_{H^{\perp}}(P_{H^{\perp}}(K))$ and $D(S)\leq D(P_{H^{\perp}}(K))$.

    Let $q^i\in K$ be such that $P_{H^{\perp}}(q^i)=p^i$, for every $i= 1,\dots,k$, and let $S':=\mathrm{conv}(\{q^1,\hdots,q^k\})$. Note that by \eqref{eq:inradiusProyectionidim} we get
    \[
    r_{H^\bot}(S) \leq r_{\mathrm{aff}(S')}(S') \leq r_{\mathrm{aff}(S')}(K\cap\mathrm{aff}(S')) \leq r_3(K)<\frac13.
    \]
Applying now Lemma \ref{lem:Diamr_i} to the convex body $K$ and Lemma \ref{lem:DiamIneqBGR} to the $3$-dimensional convex body $S$ we obtain
 \begin{align*}
       6r_3(K)\geq D(P_{H^{\perp}}(K))\geq D(S) \geq g(r_{H^{\perp}}(S))
    \end{align*}
    where
    \[
    g(x):=\sqrt{\frac{1}{2}}\sqrt{-2x^2+8x+3 +\sqrt{3}\sqrt{4x^4-16x^3+26x^2-16x+3}}.
    \]
    Again, as observed in the Proof of Lemma \ref{lem:DiamIneqBGR}, $g(x)$ is a decreasing function within $x\in[0,1/3]$, from which we can conclude that $6r_3(K) \geq g(r_3(K))$. Using one last time some algebraic tool, we solve this inequality in terms of $r_3(K)$ to derive
    \[
    r_3(K) \geq 0.28421 = 0.28421 R(S).
    \]
    Since $R_3(K) \leq R(P_{H^\bot}(K)) = R(S)$, we conclude $R_3(K) \leq 3.5186 r_3(K)$.    
\end{proof}

\begin{rmk}
    One may be tempted to think of \eqref{eq:rDR_inequality} in higher dimensions. The best-known bound in this regard is due to Dekster \cite{De85}. For every $K\in\mathcal K^n$ then
    \[
    r(K) \geq \sqrt{1-\frac{(n-1)}{2n}D(K)^2}.
    \]
    Following the same approach as in the proof of Theorem \ref{thm:53}, we would obtain for every $K\in\mathcal K^{2i-1}$ that
    \[
    R_i(K) \leq \sqrt{2i^2-2i+1}\, r_i(K).
    \]
    However, $\sqrt{2i^2-2i+1} < i+1$ only holds for $i<4$, and in those cases Perel'man's \eqref{eq:Pereln3i2} (dimension $3$) and Theorem \ref{thm:53} (dimension $5$) give better estimates. This shows that extending \eqref{eq:rDR_inequality} to higher dimensions is not only interesting in order to find complete systems of inequalities for the inradius, circumradius, and diameter, but also to improve \eqref{eq:PerelPuk}.
\end{rmk}

\section{Estimates for large values of $i$}\label{sec:largeValues}

We start this section with some technical lemmas. The first explains how to find supporting hyperplanes to some convex set containing a subspace that does not cut the interior of the same body.
\begin{lemma}\label{lem: hyperplane}
         Let $K\in\mathcal K^n$ and $H\in\mathcal{A}_i^n$ with $i\in\{1,\ldots,n-2\}$ such that $H\cap\mathrm{int}( K)=\emptyset$. Then, there exists $H'\in\mathcal{A}_{n-1}^n$ such that $H\subset H'$ and $H'\cap \mathrm{int}( K)=\emptyset.$
\end{lemma}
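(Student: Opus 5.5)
The plan is to reduce the statement to the classical separation theorem for disjoint convex sets. Since $H$ is an affine subspace, it is convex. Since $K$ is a convex body, $\mathrm{int}(K)$ is a nonempty open convex set. The hypothesis $H\cap\mathrm{int}(K)=\emptyset$ therefore gives two disjoint convex sets, one of them open. The Hahn--Banach separation theorem in $\mathbb R^n$ then provides a nonzero vector $a\in\mathbb R^n$ and a constant $c\in\mathbb R$ with
\[
a^{\top}x < c \ \ \text{for all } x\in\mathrm{int}(K), \qquad a^{\top}y\geq c \ \ \text{for all } y\in H .
\]
The inequality is strict on $\mathrm{int}(K)$ because a nonconstant linear functional does not attain its supremum on an open set.

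The key step is to show that $a^{\top}$ is constant on $H$. Write $H=z+V$ with $V$ a linear subspace of dimension $i$ and $z\in H$. If there were $v\in V$ with $a^{\top}v\neq 0$, then $a^{\top}(z+tv)$ would take arbitrarily negative values as $t$ ranges over $\mathbb R$. This contradicts $a^{\top}y\geq c$ on $H$. Hence $a\in V^{\perp}$, and $a^{\top}y=a^{\top}z=:c'\geq c$ for every $y\in H$.

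Now define $H':=\{x\in\mathbb R^n: a^{\top}x=c'\}$. This is an affine hyperplane, so $H'\in\mathcal A^n_{n-1}$, and it contains $H$ by the previous step. For $x\in\mathrm{int}(K)$ we have $a^{\top}x<c\leq c'$, so $H'\cap\mathrm{int}(K)=\emptyset$, as required.

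I expect no serious obstacle. The only points needing care are that the separation is strict on the open set, which is automatic as noted above, and that a linear functional bounded below on an affine subspace must be constant there. The restriction $i\leq n-2$ in the statement is not needed for the argument; it only excludes the trivial case $i=n-1$, where one may take $H'=H$. If one wanted $H'$ to actually support $K$, one could translate it parallel to itself toward $K$ until it touches $K$, but the statement does not require this.
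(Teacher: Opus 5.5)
Your proof is correct, but it gets the containment $H\subset H'$ by a different mechanism than the paper. The paper fixes $p\in H$ and projects orthogonally onto $(-p+H)^\perp$. There $H$ collapses to a single point $p'$, and $\mathrm{int}(K)$ becomes an open convex set missing $p'$. The paper separates $p'$ from that set by an $(n-i-1)$-dimensional hyperplane $L$ of $(-p+H)^\perp$ and lifts back via $H'=\mathrm{aff}(H\cup L)$. In that route the containment comes from the construction. However, the argument tacitly needs $L$ to pass through $p'$, since otherwise $\mathrm{aff}(H\cup L)=\mathbb R^n$. Such an $L$ exists because $p'$ lies outside an open convex set, but the paper does not say so.

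You instead separate $H$ from $\mathrm{int}(K)$ directly in $\mathbb R^n$. You then get the containment from the fact that a linear functional bounded below on an affine subspace is constant on it. This forces $a\in V^\perp$, so the level set $\{a^{\top}x=c'\}$ is automatically a hyperplane through $H$. No projection, dimension count, or hyperplane through a prescribed point is needed. Your version is therefore a bit more self-contained, and as you note it covers $i=n-1$ unchanged. The paper's version makes the reduction to ``point versus open convex set'' geometrically explicit.

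One caution about your closing remark: translating $H'$ toward $K$ would in general destroy $H\subset H'$. In the paper's application (Lemma~\ref{lem:factortwo}) no translation is needed. There $H$ contains a point $x\in K$, so any hyperplane containing $H$ and missing $\mathrm{int}(K)$ already supports $K$ at $x$.
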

     \begin{proof}
         Let $p\in H$. Since $H\cap \mathrm{int}( K)=\emptyset$, we have 
         \[
         P_{(-p+H)^\perp}(H)\cap P_{(-p+H)^\perp}(\mathrm{int}( K))=\{p'\}\cap P_{(-p+H)^\perp}(\mathrm{int}( K))=\emptyset,
         \]
         where $p'=P_{(-p+H)^\perp}(p)$. Therefore, $p'$ and $P_{(-p+H)^\perp}\left(\mathrm{int}(K)\right)$ can be separated by $L$, a $(n-i-1)$-dimensional hyperplane in $(-p+H)^\perp$. Therefore, we get that $H\subset H'=\mathrm{aff}(\{H, L\})$ and $ H'\cap \mathrm{int}( K)=\emptyset.$
     \end{proof}
The following result is a generalization of an idea from \cite[Theorem 1.3]{GM17}. Here, we bound certain directional widths of a convex body by twice the same directional width of a certain intersection of $K$ with a hyperplane.
\begin{lemma}
    \label{lem:factortwo}
   Let $K\in\mathcal{K}^n$, $v,w\in \mathrm{bd}(K)$, and $a\in \R^n\setminus\{0\}$ be such that $w(K)=a^{\top}v-a^{\top}w$ with $v-w\in\langle a \rangle$.
   Furthermore, let $H=\frac{v+w}{2}+\langle a\rangle^{\perp}$. 
  Then, for $u\in \langle a \rangle ^{\perp}$, we have 
    \begin{equation*}
       h_K(u)+h_K(-u)\leq 2\left( h_{K\cap H} (u)+  h_{K\cap H} (-u) \right)
    \end{equation*}
\end{lemma}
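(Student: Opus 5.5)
The plan is to compare $K$ with its ``middle slice'' $K\cap H$ by a homothety-type argument centered at $v$ or $w$. Throughout, I read the hypothesis as in \eqref{eq:widthParallel}: $a$ is a unit vector with $a^{\top}v=h_K(a)$ and $a^{\top}w=-h_K(-a)$. Hence $K$ lies in the slab $\{x: a^{\top}w\le a^{\top}x\le a^{\top}v\}$, and $H$ is the hyperplane $\{a^{\top}x=m\}$ with $m:=\tfrac{a^{\top}v+a^{\top}w}{2}$.

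\textbf{Step 1.} Let $c:=\frac{v+w}{2}$. Then $c\in K\cap H$ by convexity. Since $v-w\in\langle a\rangle$ and $u\perp a$, we have $u^{\top}v=u^{\top}w=u^{\top}c$. Because $c\in K\cap H\subseteq K$, it follows that $h_K(u)-u^{\top}c\ge h_{K\cap H}(u)-u^{\top}c\ge 0$, and the same holds with $-u$.

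\textbf{Step 2.} Let $x\in K$ attain $u^{\top}x=h_K(u)$, and set $t:=a^{\top}x\in[a^{\top}w,a^{\top}v]$. Suppose first $t\ge m$. Then the segment $[w,x]\subseteq K$ meets $H$ at $y=\lambda x+(1-\lambda)w$ with
\[
\lambda=\frac{m-a^{\top}w}{t-a^{\top}w}.
\]
Since $t-a^{\top}w\le a^{\top}v-a^{\top}w=2(m-a^{\top}w)$, we get $\lambda\in[\tfrac12,1]$. (If $t=a^{\top}w$, then $t\ge m$ forces $v=w$, which is impossible for a body.) If instead $t<m$, use $[v,x]$ symmetrically, again obtaining $\lambda\ge\tfrac12$.

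\textbf{Step 3.} In either case $y\in K\cap H$. Using $u^{\top}w=u^{\top}v=u^{\top}c$, we get
\[
h_{K\cap H}(u)-u^{\top}c\ \ge\ u^{\top}y-u^{\top}c=\lambda\,(h_K(u)-u^{\top}c)\ \ge\ \tfrac12\,(h_K(u)-u^{\top}c).
\]
The last inequality uses the nonnegativity from Step 1 together with $\lambda\ge\tfrac12$.

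\textbf{Step 4.} Repeating Steps 2 and 3 with $-u$ in place of $u$ gives the analogous inequality. Adding the two, the $u^{\top}c$ terms cancel and yield exactly
\[
h_K(u)+h_K(-u)\le 2\bigl(h_{K\cap H}(u)+h_{K\cap H}(-u)\bigr).
\]

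The only delicate point is the sign control that justifies replacing $\lambda$ by $\tfrac12$. This is precisely why the slice is taken at the midpoint $c$ of a minimal-width chord: $u^{\top}$ is constant along $[w,v]$, which makes the reference point $c$ the same for both segment choices.
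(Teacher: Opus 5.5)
Your proof is correct, but it runs primal where the paper's runs dual.

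\textbf{The paper's route.} It normalizes to $a=e^1$, $u=e^2$, $c=0$ and lets $x$ maximize $u$ over the section $K\cap H$. It then uses Lemma \ref{lem: hyperplane} to get a supporting hyperplane of $K$ at $x$ that contains $x+\langle e^3,\dots,e^n\rangle$, so its outer normal has the form $\alpha e^1+e^2$. Testing this supporting inequality at the maximizer $z$ of $u$ over $K$ and at $\pm v$ gives $z_2\le x_2-\alpha z_1\le x_2+|\alpha|v_1\le 2x_2$.

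\textbf{Your route.} You start from the maximizer over $K$ instead. You exhibit an explicit point of $K\cap H$ on the segment to $w$ or $v$, and the slab forces $\lambda\ge\frac12$. Both routes end at the same one-sided estimate $h_{K\cap H}(u)-u^{\top}c\ge\frac12\bigl(h_K(u)-u^{\top}c\bigr)$.

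\textbf{What each buys.} Your version needs only convexity and compactness, with no separation lemma. It also avoids a step the paper leaves implicit: that the normal of $H'$ has a positive $e^2$-component, so it can be normalized to $\alpha e^1+e^2$. (This holds because $H$ meets $\mathrm{int}(K)$ and $x$ maximizes $e^2$ on $K\cap H$.) The paper's version produces a supporting-hyperplane certificate and amounts to a reduction to the plane $\langle a,u\rangle$. Neither proof uses minimality of the width, only the slab condition.

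\textbf{On the hypothesis.} Your reading of the hypothesis as in \eqref{eq:widthParallel} is what the paper's proof uses ($h_K(e^1)=v_1$), and it is actually necessary. Take the triangle with vertices $(0,0),(10,0),(5,1)$, the boundary points $w=(0,0)$, $v=(1,0)$, and $a=e^1$. These satisfy the literal hypothesis: $a^{\top}v-a^{\top}w=1=w(K)$ and $v-w\in\langle a\rangle$. Yet for $u=e^2$ the left side is $1$ while the right side is $0.2$.
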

\begin{proof} 
After a suitable rigid motion of $K$, we may assume that $a=e^1$, $u=e^2$ and $v=-w=v_1 e^1$. Then, $H=\langle e^1\rangle^{\perp}$ and $H$ passes through $\frac{v+w}{2}=0$.  

Let $x,z\in \mathrm{bd}(K)$ such that 
\begin{align*}
    h_{K\cap H} (e^2)&=(e^2)^{\top}x,\\
      h_{K} (e^2)&=(e^2)^{\top}z.
\end{align*}

Since $h_K(e^1)=v_1$ and $h_K(-e^1)=-v_1$, we have
\begin{equation}
\label{eq:l1}
      -v_1\leq z_1 \leq v_1 .
\end{equation}

By Lemma \ref{lem: hyperplane} applied to $K$ and $H'':= x+\langle \{e^3,\dots,e^n\}\rangle$, since $H''\cap\mathrm{int}(K)=\emptyset$, we obtain the existence of $H'\in\mathcal A^n_{n-1}$ 
with $H''\subset H'$ and $H'\cap\mathrm{int}(K)=\emptyset$. Note that the outer normal $a_x$ to $H'$ has to be orthogonal to $H''-x=\langle e^3,\dots,e^n\rangle$, and thus, we can assume that $a_x=\alpha e^1+e^2$ for some $\alpha\in\mathbb R$. In particular, $H'$ supports $K$ in $x$.
Then,  
\begin{align}
\label{eq:l0}
        h_K(a_x)=a_x^{\top}x=x_2=(e^2)^{\top}x.
\end{align}
It follows from \eqref{eq:l0} that
\begin{align}
       (e^2)^{\top}x&= h_K(a_x)\geq a_x^{\top}z=\alpha z_1+z_2,  \label{eq:l2}\\
         (e^2)^{\top}x&= h_K(a_x)\geq a_x^{\top}v=\alpha v_1,  \label{eq:l3}\\
          (e^2)^{\top}x&= h_K(a_x)\geq a_x^{\top}(-v)=-\alpha v_1.\label{eq:l4}
\end{align}
Using \eqref{eq:l2}, \eqref{eq:l1}, and then \eqref{eq:l3} or \eqref{eq:l4}, we obtain 
\begin{equation}
    \begin{split}
    h_K(e^2)=z_2&\leq (e^2)^{\top}x- \alpha z_1\\
    &\leq (e^2)^{\top}x +|\alpha| v_1\\
    &\leq (e^2)^{\top}x +(e^2)^{\top}x\\
    &=2 h_{K\cap H}(e^2). 
    \end{split}
\end{equation}
Analogously, $h_K(-e^2)\leq 2 h_{K\cap H}(-e^2)$, completing the proof. 
\end{proof}

The third lemma bounds from above the successive outer radius of $K$ by some multiple of the minimal width of a certain section of $K$.
\begin{lemma}
\label{lem:widthbound}
    Let $K\in\mathcal{K}^n$ and $1\leq i\leq n-1$. 
    There exists $H\in\mathcal{A}^n_{i}$ 
    such that
    \[
    R_{n-i+1}(K)\leq 2^{n-i-1}\sqrt{n-i+1}~w_{H}\left(K\cap H\right).
    \]
\end{lemma}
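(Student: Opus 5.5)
Induction on $m:=n-i$ is the natural route, with Lemma \ref{lem:factortwo} used to halve the directional widths at each step and, at the end, a Jung/Steinhagen type bound converting a width bound in every direction of a subspace into a circumradius bound for a projection.

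The plan is to construct a flag of affine sections $K=K_0\supset K_1\supset\cdots\supset K_{m}$. Each $K_j$ has dimension $n-j$, and each is obtained from the previous one by cutting with the mid-hyperplane orthogonal to a minimal-width direction. Concretely, given $K_j$ inside the affine space $L_j$ of dimension $n-j$, I choose $v^j,w^j\in\mathrm{bd}(K_j)$ and a unit vector $a^j$ parallel to $L_j$ realizing $w_{L_j}(K_j)$ as in \eqref{eq:widthParallel}. I then set $L_{j+1}:=\frac{v^j+w^j}{2}+(\langle a^j\rangle^\perp\cap(L_j-L_j))$ and $K_{j+1}:=K_j\cap L_{j+1}$. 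The directions $a^0,\dots,a^{m-1}$ are pairwise orthogonal. Let $E:=\langle a^0,\dots,a^{m-1}\rangle$, of dimension $m=n-i$, and take $H:=L_m\in\mathcal A^n_i$, so that $K\cap H\supseteq K_m$. I expect $K\cap H=K_m$, since each $L_j$ is contained in the previous one.

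For the width bound, Lemma \ref{lem:factortwo} applied inside $L_j$ gives $h_{K_j}(u)+h_{K_j}(-u)\le 2(h_{K_{j+1}}(u)+h_{K_{j+1}}(-u))$ for every $u$ orthogonal to $a^j$ and parallel to $L_j$. The next step is a choice of $(n-i+1)$-dimensional subspace $F$ onto which to project. I take $F=E\oplus\langle b\rangle$, where $b$ is a unit vector in the direction of $H$ realizing $w_H(K\cap H)$. I then need to bound the widths of $K$ in every unit direction $u\in F$.

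For $u=a^j$, the width of $K$ is controlled by iterating: $w(K)=w(K_0)$, and $w_{L_j}(K_j)$ is at most the width of $K_j$ in direction $b$. That direction is orthogonal to $a^0,\dots,a^{j-1}$, so it is bounded by $2^{m-j}w_H(K\cap H)$, and this is at most $2^{m}w_H(K\cap H)$. However, the width of $K$ in direction $a^j$ for $j\ge1$ is not $w(K_j)$, because $K$ is wider than $K_j$. This is the main obstacle. I would first try to bound the width of $K$ in direction $a^j$ by $2^{j}$ times the width of $K_j$ in direction $a^j$, by repeated application of Lemma \ref{lem:factortwo} (valid since $a^j\perp a^0,\dots,a^{j-1}$). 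That gives $2^{j}w_{L_j}(K_j)\le 2^{j}\cdot 2^{m-j}w_H=2^m w_H$. Similarly, direction $b$ gives width at most $2^m w_H(K\cap H)$. For a general unit $u\in F$ I would not control the width directly. Instead I would use that $P_F(K)$ lies in a box with $m+1$ orthogonal edge lengths each at most $2^m w_H$, giving $R(P_F K)\le \frac12\sqrt{m+1}\,2^m w_H=2^{n-i-1}\sqrt{n-i+1}\,w_H(K\cap H)$. This is exactly the claimed bound, which suggests this is the intended argument.

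Finally, $R_{n-i+1}(K)\le R(P_F(K))$ completes the proof. The delicate points are verifying the hypotheses of Lemma \ref{lem:factortwo} at each level when it is applied in the relative setting inside $L_j$, and checking the bookkeeping that direction $a^j$ picks up the factor $2^{j}$ while its minimal width contributes $2^{m-j}$.
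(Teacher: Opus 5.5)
Your proposal is correct and is essentially the paper's proof. It uses the same flag of midpoint sections orthogonal to successive minimal-width directions, and Lemma \ref{lem:factortwo} applied level by level to gain the factor $2^{j}$ on the width of $K$ in direction $a^j$. It finishes with the same box bound for the projection onto the span of the $n-i$ cut directions together with the minimal-width direction $b$ of $K\cap H$.

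The only difference is cosmetic. You bound $w_{L_j}(K_j)\le 2^{m-j}w_H(K\cap H)$ by pushing $b$ down through the remaining levels at once, whereas the paper chains $w_k\le 2w_{k+1}$ one level at a time. In your version, the orthogonality actually needed is $b\perp a^j,\dots,a^{m-1}$, not $a^0,\dots,a^{j-1}$ as you wrote; it holds because $b$ is orthogonal to every $a^\ell$.
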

\begin{proof}
    Applying a rigid motion if needed, we can assume that $\pm(w(K)/2)e^1 \in K$ and $K$ is contained between the parallel supporting hyperplanes $\pm(w(K)/2)e^1 + \langle e^1\rangle^{\perp}$. 
    We denote by $w_0:=w(K)$ and $H_1:=\langle e^1\rangle^{\perp}$.

Now consider the $(n-1)$-dimensional body $K\cap H_1$. After another rigid motion if required, we may assume that its minimal width (in $H_1$) is attained in the direction $e^2$. Let $x^1,y^1\in\mathrm{relbd}(K\cap H_1)$ be such that
\begin{equation*}
    w_1:=w_{H_1}(K\cap H_1)=(e^2)^{\top} x^1 - (e^2)^{\top} y^1 
\end{equation*}
with $x^1-y^1\in\langle e^2\rangle$ (see \eqref{eq:widthParallel}).
Then, we define the $(n-2)$-dimensional affine subspace
\begin{equation*}
    H_2:=H_1\cap \left(\frac{x_1+y_1}{2}+\langle e^2 \rangle^{\perp}\right)=\frac{x_1+y_1}{2}+\langle e^1,e^2 \rangle^{\perp}.
\end{equation*}
We continue this construction iteratively. Hence, for $k\in[n-i]$, we consider the $(n-k)$-dimensional body $K\cap H_k$ and (after a suitable rotation) points $x^k,y^k\in\mathrm{relbd}(K\cap H_k)$ such that
\begin{equation*}
    w_k:=w_{H_k}(K\cap H_k)=(e^{k+1})^{\top} x^k - (e^{k+1})^{\top} y^k 
\end{equation*}
with $x^k-y^k\in\langle e^{k+1}\rangle$
and define
\begin{equation*}
    H_{k+1}:=\frac{x_k+y_k}{2}+\langle \{ e^1,\hdots,e^{k+1} \}\rangle^{\perp}.
\end{equation*}

Thus, we have
\begin{equation}
    h_{K\cap H_{k}}(e^{k+1})+ h_{K\cap H_{k}}(-e^{k+1})=w_k.
\end{equation}
We now apply Lemma \ref{lem:factortwo} for the $(n-k)$-dimensional body $K\cap H_{k}$, where $w_{H_k}(K\cap H_k)=(e^{k+1})^{\top} x^k - (e^{k+1})^{\top} y^k $, and where the subspace is precisely $H_{k+1}=\frac{x^k+y^k}{2}+\langle\{e^{k+2},\dots,e^n\}\rangle$, obtaining that
\begin{equation*}
    h_{K\cap H_{k}}(e^{j})+ h_{K\cap H_{k}}(-e^{j})\leq 2  ( h_{K\cap H_{k+1}}(e^{j})+ h_{K\cap H_{k+1}}(-e^{j}))
\end{equation*}
for $j\geq k+2$. In particular, for $j=k+2$, we obtain
\begin{equation}\label{eq:w2factor}
    \begin{split}
    w_k & \leq h_{K\cap H_{k}}(e^{k+2})+ h_{K\cap H_{k}}(-e^{k+2}) \\
        & \leq 2  ( h_{K\cap H_{k+1}}(e^{k+2})+ h_{K\cap H_{k+1}}(-e^{k+2})) = 2w_{k+1}.
    \end{split}
\end{equation}

By applying Lemma \ref{lem:factortwo} iteratively and due to the fact that $e^{k}\in \langle e^1,\dots,e^\ell\rangle^\bot$, for every $\ell=1,\dots,k-1$, we therefore obtain
\begin{equation}
\label{eq:boxsides}
    \begin{split}
          h_K(e^k)+h_K(-e^k)&\leq 2( h_{K\cap H_1}(e^k)+h_{K\cap H_1}(-e^k))\\
     &\leq 2^2( h_{K\cap H_2}(e^k)+h_{K\cap H_2}(-e^k)) \\
     &\leq \hdots\\
     &\leq 2^{k-1}( h_{K\cap H_{k-1}}(e^k)+h_{K\cap H_{k-1}}(-e^k))\\
     &= 2^{k-1}w_{k-1},
    \end{split}
\end{equation}
for $k\in[n-i+1]$.

The projection $P_{\langle e^1,\hdots, e^{n-i+1}\rangle}(K)$ is contained in the $(n-i+1)$-dimensional box 
\begin{equation*}
    \begin{split}
    &B:=\\
    &\left[-h_K(-e^1),h_K(e^1)\right]\times \left[-h_K(-e^2),h_K(e^2)\right]\times \hdots \times \left[-h_K(-e^{n-i+1}),h_K(e^{n-i+1})\right].
    \end{split}
\end{equation*}
By \eqref{eq:boxsides}, the side lengths of $B$ are upper bounded by  $w_0,2w_1,\hdots,2^{n-i}w_{n-i}$, and we know that the circumradius of a box is half the length of its diagonal. By \eqref{eq:w2factor}, we have $2^{k-1}w_{k-1}\leq 2^{n-i}w_{n-i}$ for every $k\in[n-i]$. Together we obtain, 
\begin{align*}
    R_{n-i+1}(K)&\leq R(P_{\langle e^1,\hdots, e^{n-i+1}\rangle}(K))\leq R(B)\\
&\leq\frac{1}{2}\sqrt{(w_o)^2+(2w_1)^2+\hdots+(2^{n-i}w_{n-i})^2}\\
&\leq\frac{1}{2}\sqrt{(n-i+1)(2^{n-i}w_{n-i})^2}\\
&=2^{n-i-1}\sqrt{n-i+1}~ w_{n-i}\\
&=2^{n-i-1}\sqrt{n-i+1}~ w_{K\cap H_{n-i}}(K\cap H_{n-i}). 
\end{align*}
Finally, choosing $H:=H_{n-i}$ concludes the lemma. 
\end{proof}

We can now finish the proof of Theorem \ref{thm:Main}.
\begin{proof}[Proof of right-hand-side of Theorem \ref{thm:Main}]
    Let $H\in \mathcal{A}^{n}_{i}$ 
    be as in Lemma \ref{lem:widthbound}.
    By Steinhagen's inequality \eqref{eq:Steinhagen} applied to 
    $K\cap H$, and since 
    $\max\{\sqrt{i}, (i+1)/\sqrt{i+2}\}\leq \sqrt{i+1}$, we conclude
    \begin{align*}
        R_{n-i+1}(K)&\leq 2^{n-i-1}\sqrt{n-i+1}~w_{H}\left(K\cap H\right)\\
        &\leq  2^{n-i}\sqrt{n-i+1}\sqrt{i+1}~r_{H}\left(K\cap H\right)\\
        &\leq  2^{n-i}\sqrt{n-i+1}\sqrt{i+1}~ r_i(K). 
    \end{align*}
\end{proof}

\begin{rmk}
    Let $m\in\mathbb N$ be fixed. Note that, on the one hand, for every $K\in\mathcal K^n$ and $i:=n-m$, the right-hand side of Theorem \ref{thm:Main} implies that
    \[
    \frac{R_{n-i+1}(K)}{r_i(K)} = \frac{R_{m+1}(K)}{r_{n-m}(K)} \leq 2^m\sqrt{(m+1)(n-m)}.
    \]
    On the other hand, \eqref{eq:simplex} implies that for a $n$-dimensional regular simplex $S_n$ we have that
    \[
    \frac{R_{m+1}(S_n)}{r_{n-m}(S_n)} = \sqrt{\frac{(m+1)(n-m)(n-m+1)}{n+1}}.
    \]
    This means that Theorem \ref{thm:Main} is asymptotically optimal when $i=n-m$ with $m$ fixed, up to the constant factor
    \[
    \lim_{n\rightarrow\infty} \frac{2^m\sqrt{(m+1)(n-m)}}{\sqrt{\frac{(m+1)(n-m)(n-m+1)}{n+1}}} = 2^m.
    \]
    We note that Perel'man's bound \eqref{eq:PerelPuk} $R_{m+1}(K)/r_{n-m}(K)\leq n-m+1$ gives a strictly worse asymptotic order of magnitude.
\end{rmk}
\clearpage
\textit{Acknowledgements:} The second author would like to thank the hospitality of the Technical University of Munich, where she completed a research stay together with Ren\'e Brandenberg and Mia Runge from April to June $2026$, and where most of this project was carried out.

\vspace{1cm}
\end{document}